\documentclass[11pt]{amsart}

\usepackage[margin=1.05in]{geometry}
\usepackage{amsmath,amssymb,amsthm,mathtools}
\usepackage{microtype}
\usepackage{hyperref}

\hypersetup{
  colorlinks=true,
  linkcolor=blue,
  citecolor=blue,
  urlcolor=blue,
  pdftitle={Extended Tropical Characteristic Fibers and Slopes of Binomial D-Modules},
  pdfauthor={Jen-Chieh Hsiao}
}

\newtheorem{theorem}{Theorem}[section]
\newtheorem{proposition}[theorem]{Proposition}
\newtheorem{lemma}[theorem]{Lemma}
\newtheorem{corollary}[theorem]{Corollary}
\theoremstyle{definition}
\newtheorem{definition}[theorem]{Definition}
\theoremstyle{remark}
\newtheorem{remark}[theorem]{Remark}

\newcommand{\C}{\mathbb C}
\newcommand{\R}{\mathbb R}
\newcommand{\Z}{\mathbb Z}
\newcommand{\N}{\mathbb N}
\newcommand{\cT}{\mathcal T}
\newcommand{\gr}{\operatorname{gr}}
\newcommand{\Trop}{\operatorname{Trop}}
\newcommand{\Ch}{\operatorname{Ch}}
\newcommand{\Supp}{\operatorname{Supp}}
\newcommand{\Spec}{\operatorname{Spec}}
\newcommand{\Var}{\operatorname{Var}}
\newcommand{\qdeg}{\operatorname{qdeg}}
\newcommand{\im}{\operatorname{im}}
\newcommand{\conv}{\operatorname{conv}}
\newcommand{\Slope}{\operatorname{Slope}}
\newcommand{\Jump}{\operatorname{Jump}}
\newcommand{\ext}{\mathrm{ext}}
\newcommand{\red}{\mathrm{red}}
\newcommand{\TT}{\mathbb T}

\title[Extended tropical characteristic fibers]{Extended Tropical Characteristic Fibers and Slopes of Binomial $D$-Modules}
\author{Jen-Chieh Hsiao}
\address{Department of Mathematics, National Cheng Kung University, Tainan 701, Taiwan}
\email{jhsiao@mail.ncku.edu.tw}
\date{}

\subjclass[2020]{Primary 13N10, 14T20; Secondary 32C38, 33C70, 13P10}
\keywords{Weyl algebra, $D$-module, slope, characteristic variety, extended tropicalization, GKZ system, binomial $D$-module}

\begin{document}

\begin{abstract}
For a holonomic $D$-module, we consider the extended tropicalization of the reduced $L$-characteristic fiber along the family $L_s=F+(s-1)V_Y$. For $A$-hypergeometric systems, this tropical fiber determines the complete $(A,L)$-umbrella and hence recovers the classical slopes. We extend this result to holonomic binomial $D$-modules, for which the tropical jump parameters are exactly the classical slopes at the origin. As an application, a holonomic binomial $D$-module is regular holonomic if and only if there are no tropical jumps along any coordinate subspace.
\end{abstract}

\maketitle

\section{Introduction}

For an ordinary differential equation, the nonhorizontal slopes of the Newton polygon measure irregularity, and their absence is the Fuchsian regularity condition. In higher dimensions, slopes of holonomic $D$-modules are described through the variation of weighted characteristic varieties; see, for example, \cite{LaurentMebkhout,ACGSlopes}. For the family $L=pF+qV_Y$, where $F$ is the order filtration and $V_Y$ is the filtration along a smooth submanifold $Y$, the slopes occur at the exceptional parameters where the relevant $L$-characteristic components change.

Slopes along $Y$ are defined at a fixed point $y\in Y$. Accordingly, the relevant information comes from those components of $\Ch^L(M)$ that meet the cotangent fiber $T_y^*X$. This leads naturally to the reduced $L$-characteristic fiber
\[
  (\Ch^L(M)\cap T_y^*X)_{\red},
\]
and to the question of whether its variation remains visible after tropicalization. Ordinary tropicalization in the dense torus is insufficient, since components contained in coordinate hyperplanes of $T_y^*X$ would be lost. We therefore use extended tropicalization, which records all coordinate strata. As $L$ varies in the $F$--$V_Y$ family, these extended tropical characteristic fibers form a one-parameter family, and we study the parameters at which this family changes. Precise definitions are given in Sections 2 and 3.

Our model case is the $A$-hypergeometric system $M_A(\beta)$. Schulze and Walther describe its $L$-characteristic variety by conormal components indexed by the $(A,L)$-umbrella $\Phi_A^L$, whose changes as $L$ varies determine the slopes \cite{SchulzeWalther}. We show in Proposition~\ref{prop:gkz-fiber} that the intersection of each conormal component with $T_0^*X$ is scheme-theoretically the corresponding affine toric orbit closure. Standard toric tropical geometry then shows that the extended tropical characteristic fiber records exactly the faces of $\Phi_A^L$. Hence it determines the complete umbrella, thereby recovering the slopes of $M_A(\beta)$; see Theorem~\ref{thm:recover-umbrella} and Corollary~\ref{cor:gkz-slopes}.

The tropical detection of slopes established above for $A$-hypergeometric systems extends to holonomic binomial $D$-modules. The characteristic variety and slope set of a holonomic binomial $D$-module are unions of contributions from its active toral components \cite{FernandezCastro}. Proposition~\ref{prop:active-face-union} shows that the fixed-$L$ tropical characteristic fiber records the union of the corresponding active umbrella face sets, but not which active component contributes a given face. Lemma~\ref{lem:critical-face} shows that every change of an active umbrella produces a face whose tropical stratum occurs at the exceptional parameter but not at nearby parameters. This yields Theorem~\ref{thm:binomial-slopes}:
\[
  \Jump^{\mathrm{trop}}_{Y,0}(M_A(I,\beta))
  =
  \Slope_0(M_A(I,\beta),Y).
\]

Combining Theorem~\ref{thm:binomial-slopes} with the classical regularity criterion for holonomic binomial $D$-modules gives the main result.

\medskip
\noindent\textbf{Main Theorem (Theorem~\ref{thm:regularity}).}
Let $M_A(I,\beta)$ be a holonomic binomial $D$-module. Then $M_A(I,\beta)$ is regular holonomic if and only if, for every coordinate subspace $Y$,
\[
  \Jump^{\mathrm{trop}}_{Y,0}(M_A(I,\beta))=\varnothing.
\]
\medskip

This result may be viewed as a higher-dimensional analogue of the one-dimensional phenomenon observed in \cite[Example~2.6]{HsiaoTropD}, where tropicalization of a principal differential ideal recovers the nonhorizontal slopes of the Newton polygon and detects Fuchsian regularity. We return to this comparison in Section~\ref{sec:one-dimensional-comparison}.

\section{Weighted characteristic varieties, slopes, and tropicalization}\label{sec:background}

\subsection{The Weyl algebra and \texorpdfstring{$L$}{L}-characteristic varieties}

Let $X=\C^n$ and let
\[
  D=D_n(\C)=\C\langle x_1,\ldots,x_n,\partial_1,\ldots,\partial_n\rangle/([\partial_i,x_j]-\delta_{ij})
\]
be the $n$-th Weyl algebra. The order filtration is given by $F(x_i)=0$ and $F(\partial_i)=1$, so $\gr^F D\simeq\C[x,\partial]$ and $T^*X=\Spec\C[x,\partial]$. Following Schulze and Walther, we use the same symbols $\partial_i$ for the differential operators and their principal symbols; geometrically the latter are often denoted $\xi_i$.

More generally, a weight $L=(L_x,L_\partial)\in\R^{2n}$ with $L_{x_i}+L_{\partial_i}>0$ for every $i$ defines an increasing filtration on $D$ by maximum $L$-degree, with associated graded ring $\C[x,\partial]$. If $M$ is a coherent $D$-module with a good $L$-filtration, its $L$-characteristic variety is
\[
  \Ch^L(M)=\Supp_{\gr^L D}(\gr^L M)\subseteq T^*X.
\]
Its reduced support is independent of the chosen good filtration. For a cyclic module $M=D/I$, one has $\Ch^L(M)=\Var(\operatorname{in}_L(I))=\Var(\sqrt{\operatorname{in}_L(I)})$, so the slope problem below depends only on the reduced characteristic support.

\subsection{The \texorpdfstring{$F$--$V_Y$}{F--VY} family and slopes}

Let $Y=\Var(x_i:i\in V)\subseteq X$ be a coordinate subspace. The $V$-filtration along $Y$ has $V_Y(x_i)=-1$ and $V_Y(\partial_i)=1$ for $i\in V$, and weight zero on the remaining generators. We use the one-parameter family
\begin{equation}\label{eq:Ls}
  L_s=F+(s-1)V_Y,\qquad s>1.
\end{equation}
Thus $L_s(\partial_i)=s$ and $L_s(x_i)=1-s$ for $i\in V$, while $L_s(\partial_j)=1$ and $L_s(x_j)=0$ for $j\notin V$.

Fix $y\in Y$. We say that $s_0>1$ is a \emph{slope of $M$ along $Y$ at $y$} if the collection of irreducible components of $\Ch^{L_s}(M)$ that meet $T_y^*X$ is not locally constant at $s=s_0$; see \cite[Definition~3.4]{SchulzeWalther}. We denote the set of such slopes by
\[
  \Slope_y(M,Y).
\]
Throughout the paper, we express slope sets in the parameter $s$ defined by \eqref{eq:Ls}. In \cite{SchulzeWalther}, $L=pF+qV_Y$ is parametrized by $p/q$, which corresponds to $1/(s-1)$.

\subsection{Tropicalization and coordinate strata}

We recall only the tropical facts used below. Throughout, $\C$ carries the trivial valuation. For a closed subvariety $Z\subseteq(\C^*)^r$, write $\Trop(Z)\subseteq\R^r$ for its tropicalization; see \cite[Section~3.2]{MaclaganSturmfels}. If
\[
  \phi_B:(\C^*)^d\longrightarrow(\C^*)^r
\]
is the monomial map with exponent matrix $B$, then tropicalization commutes with this map \cite[Corollary~3.2.13]{MaclaganSturmfels}. In particular,
\[
  \Trop\!\left(\overline{\operatorname{im}\phi_B}\right)=\im(B^{\mathsf T}).
\]

For $\tau\subseteq[n]$, let $\TT_\tau\subseteq T_y^*X$ be the coordinate torus orbit whose nonzero coordinates are precisely those indexed by $\tau$, and let
\[
  N_\tau=\{w\in(\R\cup\{\infty\})^n:w_i<\infty\Longleftrightarrow i\in\tau\}
\]
be the corresponding tropical stratum. We identify $N_\tau$ with $\R^\tau$ by inserting $\infty$ in the complementary coordinates. For a closed subvariety $Z\subseteq T_y^*X$, its \emph{extended tropicalization} is defined orbitwise by
\begin{equation}\label{eq:extended-trop}
  \Trop^{\ext}(Z)\cap N_\tau:=\Trop(Z\cap\TT_\tau).
\end{equation}
Ordinary tropicalization records only the part of $Z$ lying in the dense torus, whereas extended tropicalization retains the information on all coordinate strata. It is compatible with finite unions.

We also use the usual orbit--face correspondence for affine toric varieties: the intersections of an affine toric orbit closure with the coordinate torus orbits are precisely the torus orbits corresponding to its faces; see \cite[Section~6.3]{MaclaganSturmfels}. Combining this with the monomial-map formula computes the extended tropicalization of an affine toric orbit closure stratum by stratum.

\section{Extended tropical characteristic fibers}\label{sec:construction}

\subsection{The fixed-\texorpdfstring{$L$}{L} construction}

Since slopes at $y$ are determined by the components of $\Ch^L(M)$ meeting $T_y^*X$, we refer to
\[
  (\Ch^L(M)\cap T_y^*X)_{\red}
\]
as the \emph{reduced $L$-characteristic fiber of $M$ at $y$}.

\begin{definition}\label{def:tropical-fiber}
Let $M$ be a holonomic $D$-module, let $Y$ be a coordinate subspace, let $y\in Y$, and let $L=pF+qV_Y$. The \emph{extended tropical characteristic fiber} is
\[
  \cT^L_{M,Y,y}
  :=
  \Trop^{\ext}\!\left((\Ch^L(M)\cap T_y^*X)_{\red}\right).
\]
\end{definition}
The filtration weight $L$ selects the characteristic variety, whereas a point of $\cT^L_{M,Y,y}$ is a tropical weight on the cotangent coordinates.

Along \eqref{eq:Ls}, a number $s_0>1$ is called a \emph{tropical jump parameter} if $s\mapsto\cT^{L_s}_{M,Y,y}$ is not locally constant at $s_0$, that is, if there is no neighborhood of $s_0$ on which this tropical characteristic fiber is constant. We denote the set of tropical jump parameters by
\[
  \Jump^{\mathrm{trop}}_{Y,y}(M).
\]

\subsection{Finite variation and slopes}

\begin{proposition}\label{prop:finite-variation}
For a coherent $D$-module $M$, the family $s\mapsto\cT^{L_s}_{M,Y,y}$ is locally constant away from finitely many rational values of $s$.
\end{proposition}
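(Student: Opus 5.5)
The plan is to reduce the statement to the standard finiteness of the $L$-Gröbner fan, that is, of the family of initial ideals. Since $M$ is coherent, it is finitely generated, and I would choose a presentation $M=D^m/N$ with $N\subseteq D^m$ a submodule. For the module case I would use the $L$-initial submodule with respect to a fixed shift vector. To keep the notation light, I write the argument for a cyclic module $M=D/I$; the general case is identical, using initial submodules.

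\textbf{Step 1 (finitely many initial ideals).} The weights $L_s=F+(s-1)V_Y$ lie on a line in the space of admissible weights. By the finiteness of the Gröbner fan for the Weyl algebra (Assi--Castro-Jiménez--Granger; Saito--Sturmfels--Takayama, Ch.~2), the set $\{\operatorname{in}_{L}(I)\}$, as $L$ ranges over weights with $L_{x_i}+L_{\partial_i}>0$, is finite. Moreover, the loci where a given initial ideal occurs are relatively open polyhedral cones defined over $\mathbb{Q}$, because $I$ is generated over $\mathbb{Q}$-linear data of exponents.

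Intersecting this fan with the rational line $s\mapsto L_s$ gives finitely many breakpoints. Each breakpoint is rational, since it is the intersection of a rational line with a rational hyperplane. On each open interval between consecutive breakpoints, $\operatorname{in}_{L_s}(I)$ is constant.

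The hardest point is here. The fan is defined on the open cone of admissible weights, while the ray $s>1$ may approach its boundary only as $s\to 1^+$ or $s\to\infty$. I need finitely many walls met in total, not merely locally finitely many. My first approach would be to invoke the homogenization argument: homogenize $I$ in $D^{(h)}$ and take a universal Gröbner basis, which is finite. Each wall is then a hyperplane $L\cdot(\alpha-\alpha')=0$ for exponent pairs $\alpha,\alpha'$ appearing in the finitely many basis elements. A linear function in $s$ vanishes at most once unless it is identically zero, so there are finitely many rational breakpoints.

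\textbf{Step 2 (constancy of the fiber).} On each interval of constancy, $\Ch^{L_s}(M)=\Var(\operatorname{in}_{L_s}(I))$ is constant. It follows that the reduced fiber $(\Ch^{L_s}(M)\cap T_y^*X)_{\red}$ is constant. Consequently its extended tropicalization $\cT^{L_s}_{M,Y,y}$ is constant as well, computed stratum by stratum via \eqref{eq:extended-trop}. Therefore every tropical jump parameter lies among the finitely many rational breakpoints found in Step 1.

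One remark applies to the definition: local constancy at $s_0$ fails only if every neighborhood of $s_0$ sees a change. Any $s_0$ not among the breakpoints lies in an open interval of constancy, so it is not a jump. This completes the plan.
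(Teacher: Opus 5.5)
Your proposal is correct and follows essentially the same route as the paper. Finiteness of the rational Gr\"obner fan of a presentation of $M$ (Assi--Castro-Jim\'enez--Granger) yields finitely many rational breakpoints on the line $s\mapsto L_s$. On the complementary intervals the initial ideal (or submodule) is constant, and this forces constancy of $\Ch^{L_s}(M)$, of its reduced fiber at $y$, and hence of the extended tropicalization. Your additional argument for global rather than merely local finiteness as $s\to\infty$, via a finite universal Gr\"obner basis and the affine dependence of $L_s\cdot(\alpha-\alpha')$ on $s$, is a detail the paper leaves implicit in its citation of the Gr\"obner fan.
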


\begin{proof}
The ray $s\mapsto L_s$ meets only finitely many cones of the rational Gr\"obner fan of a filtered presentation of $M$ \cite{ACGGrFan}. On the relative interior of each resulting interval the associated graded presentation is constant. Hence its support, its intersection with the fixed cotangent fiber, the corresponding reduced variety, and its extended tropicalization are all constant.
\end{proof}

\begin{proposition}\label{prop:general-inclusion}
For every holonomic $D$-module $M$,
\[
  \Jump^{\mathrm{trop}}_{Y,y}(M)\subseteq\Slope_y(M,Y).
\]
\end{proposition}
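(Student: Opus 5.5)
We argue by contraposition. Suppose $s_0>1$ is not a slope, so the collection $\mathcal C(s)$ of irreducible components of $\Ch^{L_s}(M)$ meeting $T_y^*X$ is constant, say equal to $\{C_1,\dots,C_k\}$, for $s$ in an open interval $J\ni s_0$. We must show that $s\mapsto\cT^{L_s}_{M,Y,y}$ is constant on a neighborhood of $s_0$, so that $s_0\notin\Jump^{\mathrm{trop}}_{Y,y}(M)$.

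The first step is to observe that the reduced $L_s$-characteristic fiber depends only on $\mathcal C(s)$. As sets,
\[
  \Ch^{L_s}(M)\cap T_y^*X=\bigcup_{C\in\mathcal C(s)}(C\cap T_y^*X),
\]
because components not meeting $T_y^*X$ contribute nothing to the intersection. Passing to the reduced structure, we get
\[
  (\Ch^{L_s}(M)\cap T_y^*X)_{\red}=\Bigl(\bigcup_{i=1}^k C_i\cap T_y^*X\Bigr)_{\red}\qquad (s\in J).
\]
The right-hand side is independent of $s$. The reduced variety is determined by its underlying closed set, and scheme-theoretic multiplicities in $\gr^{L_s}M$ are irrelevant here since Definition~\ref{def:tropical-fiber} uses the reduced fiber. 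Applying $\Trop^{\ext}$, which depends only on the underlying closed subset through the orbitwise definition \eqref{eq:extended-trop}, shows that $\cT^{L_s}_{M,Y,y}$ is constant for $s\in J$. Hence $s_0$ is not a tropical jump parameter.

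The main point to check is that the components $C_i$ themselves are literally the same subvarieties of $T^*X$ for all $s\in J$. One must not merely have a bijection of index sets. This is how the phrase ``the collection of irreducible components is locally constant'' in \cite[Definition~3.4]{SchulzeWalther} is to be read: it is an equality of sets of subvarieties. We should still remark on it. If one preferred a weaker reading, Proposition~\ref{prop:finite-variation} (through the Gr\"obner fan argument) gives that $\Ch^{L_s}(M)$ itself is constant on the open intervals between finitely many rational breakpoints. So the only potential issue is at a breakpoint $s_0$. There, non-slope means that the components through $T_y^*X$ on both adjacent intervals and at $s_0$ agree as subvarieties, and the displayed identity applies verbatim.
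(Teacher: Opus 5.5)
Your argument is correct and is essentially the paper's proof: if $s_0$ is not a slope, the components of $\Ch^{L_s}(M)$ meeting $T_y^*X$ are the same subvarieties for $s$ near $s_0$. Hence the reduced fiber $\bigl(\bigcup_i C_i\cap T_y^*X\bigr)_{\red}$, and therefore its extended tropicalization, is constant near $s_0$. Your closing remark about a ``weaker reading'' adds nothing, since at a breakpoint it just falls back on the equality-of-subvarieties reading that the paper's definition already uses, but it does not affect the proof.
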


\begin{proof}
If $s_0$ is not a slope, the irreducible components of $\Ch^{L_s}(M)$ meeting $T_y^*X$ are constant for $s$ near $s_0$. Hence their reduced union in the cotangent fiber, and therefore its extended tropicalization, is constant near $s_0$.
\end{proof}

\begin{remark}\label{rem:strict-general-inclusion}
The converse need not hold for an arbitrary holonomic $D$-module. Let $M'$ be a holonomic $D$-module having a nontrivial slope along $Y$ at the origin, produced by a characteristic component not contained in $T_0^*X$, and let
\[
  \delta_0=D/(Dx_1+\cdots+Dx_n)
\]
be the delta module at the origin. For every weight $L$ considered above,
\[
  \gr^L(\delta_0)\simeq\gr^L(D)/(x_1,\ldots,x_n),
  \qquad
  \Ch^L(\delta_0)=T_0^*X.
\]
Set $M=M'\oplus\delta_0$. Then $(\Ch^L(M)\cap T_0^*X)_{\red}=T_0^*X$ for every $L$, so the extended tropical characteristic fiber at the origin is constant. The slope-producing component of $M'$ remains a varying component of $\Ch^L(M)$, and the corresponding slope remains a slope of $M$. Thus the inclusion in Proposition~\ref{prop:general-inclusion} can be strict.
\end{remark}

\section{Characteristic fibers and toric geometry in the GKZ case}\label{sec:GKZ}

\subsection{GKZ systems and the \texorpdfstring{$(A,L)$}{(A,L)}-umbrella}

Let $A=(a_1,\ldots,a_n)\in\Z^{d\times n}$ have rank $d$ and assume that $\N A$ is pointed; after replacing the ambient lattice by $\Z A$, we may also assume $\Z A=\Z^d$. Let
\[
  I_A=\langle\partial^u-\partial^v:Au=Av\rangle\subseteq\C[\partial]
\]
be the toric ideal and $E_i=\sum_j a_{ij}x_j\partial_j$ the Euler operators. For $\beta\in\C^d$, the $A$-hypergeometric system is
\[
  M_A(\beta)=D/D(I_A,E_1-\beta_1,\ldots,E_d-\beta_d).
\]

The torus $T_A=(\C^*)^d$ acts on $T_0^*X=\Spec\C[\partial]$: for a point $z=(z_1,\ldots,z_n)$, $t\cdot z=(t^{a_1}z_1,\ldots,t^{a_n}z_n)$. For $\tau\subseteq[n]$, let $e_\tau$ be the point with coordinate $1$ on $\tau$ and $0$ outside $\tau$, and set $O_A^\tau=T_A\cdot e_\tau$. If $I_\tau$ is the toric ideal of $A_\tau$, then
\[
  I_A^\tau=\C[\partial]I_\tau+\langle\partial_j:j\notin\tau\rangle,
  \qquad
  \overline{O_A^\tau}=\Var(I_A^\tau)\simeq\Spec\C[\N A_\tau].
\]

Throughout this section, $L=(L_x,L_\partial)$ satisfies
\[
  L_x+L_\partial=c\mathbf 1\qquad(c>0),
\]
and $L(\partial_j)>0$ for every $j$. In particular, every $L_s$ in \eqref{eq:Ls} satisfies these conditions with $c=1$. Set
\[
  \Delta_A^L=\conv\!\left(0,\frac{a_1}{L(\partial_1)},\ldots,\frac{a_n}{L(\partial_n)}\right)\subset\R^d.
\]
The $(A,L)$-umbrella $\Phi_A^L$ is the face complex consisting of the faces of $\Delta_A^L$ that do not contain $0$, indexed by the corresponding subsets of $[n]$; see \cite[Section~2.3]{SchulzeWalther}. We include the empty face. For $\tau\in\Phi_A^L$, let $C_A^\tau$ denote the conormal space to $O_A^\tau$ in the notation of \cite[Definition~3.7]{SchulzeWalther}.

Schulze and Walther prove that, for every $\beta$,
\begin{equation}\label{eq:gkz-char}
  \Ch^L(M_A(\beta))=\bigcup_{\tau\in\Phi_A^L}\overline{C_A^\tau}
\end{equation}
\cite[Corollary~4.17]{SchulzeWalther}. The slopes along a coordinate subspace at the origin are precisely the parameters $s$ at which $\Phi_A^{L_s}$ changes \cite[Corollary~4.18]{SchulzeWalther}.

\subsection{From conormal components to toric orbit closures}

\begin{proposition}\label{prop:gkz-fiber}
For every $\tau\in\Phi_A^L$,
\[
  \overline{C_A^\tau}\cap T_0^*X=\overline{O_A^\tau}
\]
scheme-theoretically.
\end{proposition}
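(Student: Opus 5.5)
The plan is to compare ideals in $\C[x,\partial]$ directly, using only two facts about the conormal variety: its projection to the $\partial$-coordinates lands in $\overline{O_A^\tau}$, and it contains the zero section over $O_A^\tau$. Here $T_0^*X=\Var(x_1,\ldots,x_n)$. Following \cite[Definition~3.7]{SchulzeWalther}, I regard $C_A^\tau$ as the conormal space to the orbit $O_A^\tau\subseteq\Spec\C[\partial]$, with the $x$-variables playing the role of the dual (fiber) coordinates. Concretely,
\[
  C_A^\tau=\Bigl\{(x,\partial):\ \partial\in O_A^\tau,\ \textstyle\sum_j a_{ij}x_j\partial_j=0\ \text{for } i=1,\ldots,d\Bigr\},
\]
because the tangent space to $O_A^\tau$ at $\partial$ is spanned by the vectors $(a_{ij}\partial_j)_j$. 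Let $J\subseteq\C[x,\partial]$ be the (prime) vanishing ideal of $\overline{C_A^\tau}$. The statement to prove is the ideal equality
\[
  J+\langle x_1,\ldots,x_n\rangle \;=\; \C[x,\partial]\,I_A^\tau+\langle x_1,\ldots,x_n\rangle .
\]

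\textbf{First inclusion ($\supseteq$).} The projection $(x,\partial)\mapsto\partial$ maps $C_A^\tau$ into $O_A^\tau$, so every element of $I_A^\tau$ vanishes on $C_A^\tau$, hence on its closure. Therefore $\C[x,\partial]I_A^\tau\subseteq J$, which gives the inclusion $\supseteq$ after adding $\langle x\rangle$ to both sides.

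\textbf{Reverse inclusion ($\subseteq$).} The points $(0,\partial)$ with $\partial\in O_A^\tau$ lie in $C_A^\tau$, since the conormal conditions are linear in $x$ and hold trivially at $x=0$. Hence $\{0\}\times\overline{O_A^\tau}\subseteq\overline{C_A^\tau}$, so $J$ is contained in the vanishing ideal of $\{0\}\times\overline{O_A^\tau}$. That vanishing ideal is $P:=\C[x,\partial]I_A^\tau+\langle x\rangle$. Indeed,
\[
  \C[x,\partial]/P\simeq\C[\partial]/I_A^\tau\simeq\C[\N A_\tau]
\]
is a domain, so $P$ is prime and in particular radical, and its zero set is $\{0\}\times\overline{O_A^\tau}$. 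Thus $J\subseteq P$, and hence $J+\langle x\rangle\subseteq P$.

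Combining the two inclusions gives the equality of ideals, which is exactly the scheme-theoretic statement. The only point needing care is the identification of $C_A^\tau$ with the conormal space described above, in the coordinates of \cite{SchulzeWalther}. In particular, one must check that the cotangent fiber over the origin, $T_0^*X=\{x=0\}$, corresponds to the zero section of that conormal bundle rather than to some other fiber. I expect this bookkeeping to be the main obstacle. Once it is settled, the argument uses nothing beyond the primality of $I_A^\tau$ and the fact that a conormal variety contains the zero section over its base.
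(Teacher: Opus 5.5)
Your proof is correct and is essentially the paper's argument. The paper uses the evaluation map $\varepsilon\colon f(x,\partial)\mapsto f(0,\partial)$ and proves $\varepsilon(P_\tau)=I_A^\tau$, which is equivalent to your equality $J+\langle x\rangle=\C[x,\partial]I_A^\tau+\langle x\rangle$; both inclusions rest on the same two facts you use, and the paper adopts the same convention for $C_A^\tau$, with $x$ as the fiber coordinates, while your appeal to the primality of $I_A^\tau$ makes explicit a step the paper leaves implicit.
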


\begin{proof}
For every $z\in O_A^\tau$, the zero covector belongs to the conormal fiber over $z$. Thus $(0,z)\in C_A^\tau$, and hence $\overline{O_A^\tau}$ is contained in the intersection. We show that this is the whole intersection scheme-theoretically.

Let $P_\tau\subset S=\C[x,\partial]$ be the prime ideal of $\overline{C_A^\tau}$. Since $T_0^*X$ is defined by $x=0$, the scheme-theoretic intersection is obtained by restricting $P_\tau$ to $x=0$. Let
\[
  \varepsilon:S\longrightarrow\C[\partial],\qquad f(x,\partial)\longmapsto f(0,\partial)
\]
be the evaluation map. It is enough to show that $\varepsilon(P_\tau)=I_A^\tau$.

If $f\in P_\tau$, then $f(0,z)=0$ for every $z\in O_A^\tau$, so $\varepsilon(f)\in I_A^\tau$. Hence $\varepsilon(P_\tau)\subseteq I_A^\tau$. Conversely, if $g\in I_A^\tau$, regard $g$ as a polynomial independent of $x$. Since $g$ vanishes on $O_A^\tau$, it vanishes on every conormal fiber over $O_A^\tau$, and hence on $\overline{C_A^\tau}$. Thus $g\in P_\tau$, giving the reverse inclusion. Therefore $\varepsilon(P_\tau)=I_A^\tau$, and the result follows.
\end{proof}

Applying Proposition~\ref{prop:gkz-fiber} to the conormal components in \eqref{eq:gkz-char} gives
\begin{equation}\label{eq:gkz-reduced-fiber}
  (\Ch^L(M_A(\beta))\cap T_0^*X)_{\red}
  =\bigcup_{\tau\in\Phi_A^L}\overline{O_A^\tau}.
\end{equation}
Thus the reduced characteristic fiber is obtained from the $(A,L)$-umbrella by replacing each face with the corresponding affine toric orbit closure.

\subsection{Tropicalization and recovery of the umbrella}

Let $N_\sigma$ be the tropical coordinate stratum associated to $\sigma\subseteq[n]$. By the orbit--face correspondence recalled in Section 2.3, $\overline{O_A^\tau}$ meets $\TT_\sigma$ if and only if $\sigma$ is a face of $\tau$, and in that case the intersection is $O_A^\sigma$. Applying the monomial-map formula on this stratum gives
\begin{equation}\label{eq:orbit-trop}
  \Trop^{\ext}(\overline{O_A^\tau})\cap N_\sigma
  =\im(A_\sigma^{\mathsf T})\times\{\infty\}^{\sigma^c}
\end{equation}
when $\sigma$ is a face of $\tau$, and the intersection is empty otherwise.

For brevity write
\[
  \cT_A^L
  :=\Trop^{\ext}\!\left((\Ch^L(M_A(\beta))\cap T_0^*X)_{\red}\right).
\]
Its reduced support, and hence $\cT_A^L$, is independent of $\beta$.

\begin{theorem}\label{thm:recover-umbrella}
For every $\sigma\subseteq[n]$,
\[
  \cT_A^L\cap N_\sigma
  =
  \begin{cases}
    \im(A_\sigma^{\mathsf T})\times\{\infty\}^{\sigma^c},&\sigma\in\Phi_A^L,\\[1mm]
    \varnothing,&\sigma\notin\Phi_A^L.
  \end{cases}
\]
Hence $\cT_A^L$ determines the complete $(A,L)$-umbrella.
\end{theorem}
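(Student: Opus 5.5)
The plan is to compute $\cT_A^L$ stratum by stratum, starting from the description \eqref{eq:gkz-reduced-fiber} of the reduced characteristic fiber as $\bigcup_{\tau\in\Phi_A^L}\overline{O_A^\tau}$. Extended tropicalization is compatible with finite unions, so for each $\sigma\subseteq[n]$ we get
\[
  \cT_A^L\cap N_\sigma=\bigcup_{\tau\in\Phi_A^L}\bigl(\Trop^{\ext}(\overline{O_A^\tau})\cap N_\sigma\bigr).
\]
We then substitute \eqref{eq:orbit-trop} into each term. The term indexed by $\tau$ is $\im(A_\sigma^{\mathsf T})\times\{\infty\}^{\sigma^c}$ when $\sigma$ is a face of $\tau$, and is empty otherwise. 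Every nonempty term is therefore the same set, and it depends only on $\sigma$. Hence the union equals $\im(A_\sigma^{\mathsf T})\times\{\infty\}^{\sigma^c}$ exactly when some $\tau\in\Phi_A^L$ has $\sigma$ as a face, and is empty otherwise.

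The key combinatorial step is to show that "$\sigma$ is a face of some $\tau\in\Phi_A^L$" is equivalent to "$\sigma\in\Phi_A^L$". One direction is trivial: take $\tau=\sigma$. For the other direction, I need that $\Phi_A^L$ is closed under taking faces. If $\tau$ indexes a face $G$ of $\Delta_A^L$ with $0\notin G$, then any face of $G$ is a face of $\Delta_A^L$ and still avoids $0$. The empty face is included by convention. Here I must be careful about what "face of $\tau$" means in the orbit--face correspondence of Section~2.3. It refers to subsets $\sigma\subseteq\tau$ such that $\conv(a_j:j\in\sigma)$, or equivalently the cone $\R_{\ge0}A_\sigma$, is a face of the cone $\R_{\ge0}A_\tau$, with $\sigma$ saturated, i.e.\ containing all $j\in\tau$ with $a_j$ in that face.

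The point I expect to be the main obstacle is matching this toric notion with faces of $\Delta_A^L$. Since $0\notin G$, the cone over $G$ from the origin is $\R_{\ge0}A_\tau$. The scaled points $a_j/L(\partial_j)$ lie on the rays through the $a_j$, so cone faces of $\R_{\ge0}A_\tau$ correspond bijectively to faces of $G$, and the index sets agree because each $L(\partial_j)>0$. I also need $\tau$ itself to be saturated. Otherwise $\overline{O_A^\tau}$ would not be the orbit closure of the face indexed by $\tau$. This holds because $\Phi_A^L$ indexes faces by all $j$ whose scaled point lies on the face. I would verify it against the definition in \cite[Section~2.3]{SchulzeWalther}.

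Finally, the stratum description recovers the umbrella: $\sigma\in\Phi_A^L$ if and only if $\cT_A^L\cap N_\sigma\neq\varnothing$. This uses that $\im(A_\sigma^{\mathsf T})$ is nonempty, since it contains $0$, and for $\sigma=\varnothing$ the stratum $N_\varnothing$ is the single point $(\infty,\ldots,\infty)$, which is nonempty as well. So $\Phi_A^L$ is read off as the set of $\sigma$ whose stratum meets $\cT_A^L$.
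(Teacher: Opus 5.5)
Your proposal is correct and follows the paper's proof: decompose the reduced fiber via \eqref{eq:gkz-reduced-fiber}, apply \eqref{eq:orbit-trop} on each stratum $N_\sigma$, observe that every contributing $\tau$ yields the same set $\im(A_\sigma^{\mathsf T})\times\{\infty\}^{\sigma^c}$, and use that $\Phi_A^L$ is closed under taking faces. Your extra step goes slightly beyond the paper, which uses this compatibility implicitly. You check that faces of the cone $\R_{\ge0}A_\tau$, with saturated index sets, correspond to faces of the umbrella face $G$ because each $L(\partial_j)>0$, and that $\tau$ itself is saturated. One wording slip does not affect the argument: a polytope $\conv(a_j:j\in\sigma)$ is not a face of a cone. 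The condition you mean is that the cone $\R_{\ge0}A_\sigma$ is a face of $\R_{\ge0}A_\tau$, and that is the condition you actually use afterward.
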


\begin{proof}
By \eqref{eq:gkz-reduced-fiber}, the reduced characteristic fiber is the union of $\overline{O_A^\tau}$ over $\tau\in\Phi_A^L$. Formula \eqref{eq:orbit-trop} shows that $N_\sigma$ occurs precisely when $\sigma$ is a face of some $\tau\in\Phi_A^L$. Since $\Phi_A^L$ is a face complex, this is equivalent to $\sigma\in\Phi_A^L$. On $N_\sigma$, every containing face contributes the same linear space $\im(A_\sigma^{\mathsf T})$.
\end{proof}

\begin{corollary}\label{cor:gkz-slopes}
For every coordinate subspace $Y$,
\[
  \Jump^{\mathrm{trop}}_{Y,0}(M_A(\beta))
  =\Slope_0(M_A(\beta),Y).
\]
\end{corollary}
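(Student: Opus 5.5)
One inclusion is already available: Proposition~\ref{prop:general-inclusion} gives $\Jump^{\mathrm{trop}}_{Y,0}(M_A(\beta))\subseteq\Slope_0(M_A(\beta),Y)$. So the plan is to prove the reverse inclusion. The argument combines Theorem~\ref{thm:recover-umbrella} with the Schulze--Walther description of slopes \cite[Corollary~4.18]{SchulzeWalther}. By that result, $s_0$ is a slope along $Y$ at the origin exactly when the umbrella $\Phi_A^{L_s}$ is not locally constant at $s=s_0$. Theorem~\ref{thm:recover-umbrella} shows that the map $\Phi\mapsto\cT$ is injective on umbrellas. Indeed, the set of $\sigma$ with $\cT_A^{L_s}\cap N_\sigma\neq\varnothing$ is precisely $\Phi_A^{L_s}$, since the nonempty piece $\im(A_\sigma^{\mathsf T})\times\{\infty\}^{\sigma^c}$ always contains a point even when $\sigma=\varnothing$.

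Conversely, $\cT_A^{L_s}$ is a function of $\Phi_A^{L_s}$ alone. This is immediate from the displayed formula, and it is also independent of $\beta$. Hence $\cT_A^{L_s}$ is locally constant near $s_0$ if and only if $\Phi_A^{L_s}$ is locally constant near $s_0$. Combined with \cite[Corollary~4.18]{SchulzeWalther}, this gives equality of the two sets.

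Two points need checking:
\begin{enumerate}
\item The definition of $\Slope_0$ used here (components of $\Ch^{L_s}$ meeting $T_0^*X$) must match the umbrella-change criterion of \cite{SchulzeWalther}. The paper cites this directly, and every $\overline{C_A^\tau}$ meets $T_0^*X$ by Proposition~\ref{prop:gkz-fiber}, so components and umbrella faces correspond bijectively.
\item The reparametrization $p/q\leftrightarrow 1/(s-1)$ is a homeomorphism of the parameter range $s>1$, so local constancy is preserved under it.
\end{enumerate}

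The main obstacle is the subtle point that the umbrella could change as a set while the tropical data stay the same. This is ruled out because Theorem~\ref{thm:recover-umbrella} recovers $\Phi_A^L$ exactly, with each face detected by nonemptiness of its own stratum. Two umbrellas that differ therefore give tropical fibers that differ on some $N_\sigma$.
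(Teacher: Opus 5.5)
Your proposal is correct and follows the paper's proof. Theorem~\ref{thm:recover-umbrella} makes $\Phi_A^{L_s}\mapsto\cT_A^{L_s}$ injective (and $\cT_A^{L_s}$ depends only on $\Phi_A^{L_s}$), so both families are locally constant at exactly the same parameters, and \cite[Corollary~4.18]{SchulzeWalther} identifies the umbrella changes with the slopes; your separate appeal to Proposition~\ref{prop:general-inclusion} is redundant, since this equivalence already gives both inclusions.
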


\begin{proof}
By Theorem~\ref{thm:recover-umbrella}, the family $\cT_A^{L_s}$ is locally constant exactly when $\Phi_A^{L_s}$ is locally constant, and the latter changes precisely at the slopes \cite[Corollary~4.18]{SchulzeWalther}.
\end{proof}

\subsection{An elementary example}\label{sec:elementary-example}

Let $A=(1,m)$ with $m\ge2$, and consider
\[
  M_A(\beta)=D_2/D_2(\partial_1^m-\partial_2,\,x_1\partial_1+mx_2\partial_2-\beta).
\]
Take $Y=\{x_2=0\}$. Under $L_s$, the differential weights are $1$ and $s$, so the scaled columns are $1$ and $m/s$. Apart from the empty face,
\[
  \Phi_A^{L_s}=
  \begin{cases}
    \{\{2\}\},&1<s<m,\\
    \{\{1,2\}\},&s=m,\\
    \{\{1\}\},&s>m.
  \end{cases}
\]
Thus the unique slope is $s=m$. The corresponding reduced characteristic fibers are
\[
  (\Ch^{L_s}(M_A(\beta))\cap T_0^*X)_{\red}
  =
  \begin{cases}
    \Var(\partial_1),&s<m,\\
    \Var(\partial_1^m-\partial_2),&s=m,\\
    \Var(\partial_2),&s>m,
  \end{cases}
\]
and, with tropical coordinates $(w_1,w_2)$,
\[
  \cT_A^{L_s}=
  \begin{cases}
    \{(\infty,u):u\in\R\}\cup\{(\infty,\infty)\},&s<m,\\
    \{(u,mu):u\in\R\}\cup\{(\infty,\infty)\},&s=m,\\
    \{(u,\infty):u\in\R\}\cup\{(\infty,\infty)\},&s>m.
  \end{cases}
\]
Ordinary tropicalization in the dense torus sees only the middle case, whereas extended tropicalization also records the two coordinate-stratum cases.

\section{Holonomic binomial \texorpdfstring{$D$}{D}-modules}\label{sec:binomial}

\subsection{Active toral components}

Let $R=\C[\partial_1,\ldots,\partial_n]$ be graded by $\deg(\partial_j)=-a_j$, let $I\subset R$ be an $A$-graded binomial ideal, and let
\[
  M_A(I,\beta)=D/H_A(I,\beta),\qquad H_A(I,\beta)=DI+\sum_{i=1}^dD(E_i-\beta_i),
\]
be the associated binomial $D$-module. Fix a binomial primary decomposition $I=\bigcap_{\rho,J}C_{\rho,J}$, with $C_{\rho,J}$ primary to
\[
  I_{\rho,J}=I_\rho+\langle\partial_j:j\notin J\rangle,
\]
where $I_\rho\subset\C[\partial_j:j\in J]$ is a lattice ideal determined by a partial character; see \cite{DMM}. Write $A_J=(a_j)_{j\in J}$ and
\[
  D_J=\C\langle x_j,\partial_j:j\in J\rangle.
\]
The prime $I_{\rho,J}$ is \emph{toral} when $\dim(R/I_{\rho,J})=\operatorname{rank}(A_J)$, equivalently when its $A$-graded Hilbert function is bounded \cite{DMM}.

For a finitely generated $A$-graded $R$-module $Q$, its quasidegree set $\qdeg(Q)\subset\C^d$ is the Zariski closure of the set of degrees in which $Q$ is nonzero. For brevity, we call a toral primary component $C_{\rho,J}$ \emph{active for $\beta$} if
\[
  -\beta\in\qdeg(R/C_{\rho,J}),
\]
and use the same shorthand for its associated toral prime $I_{\rho,J}$.

Assume throughout this section that $M_A(I,\beta)$ is holonomic. For every $L$ satisfying $L_x+L_\partial=c\mathbf 1$ with $c>0$ and $L(\partial_j)>0$ for all $j$, Fernández-Fernández and Castro-Jiménez prove
\begin{align}
  \Ch^L(M_A(I,\beta))
  &=\bigcup_{(\rho,J)\ \mathrm{active}}\Ch^L(M_A(I_{\rho,J},0)),\label{eq:binomial-char-union}\\
  \Slope_0(M_A(I,\beta),Y)
  &=\bigcup_{(\rho,J)\ \mathrm{active}}\Slope_0(M_A(I_{\rho,J},0),Y)\label{eq:binomial-slope-union}
\end{align}
for every coordinate subspace $Y$; see \cite[Theorem~4.3]{FernandezCastro}.

For a toral prime $I_{\rho,J}$, the corresponding system is hypergeometric in the $J$-coordinates: after rescaling these coordinates by nonzero constants determined by $\rho$, its $D_J$-part is the GKZ system for $A_J$ \cite[Remark~3.4]{FernandezCastro}, while $\partial_j=0$ for $j\notin J$. Since the ground field is trivially valued, this rescaling does not affect the tropicalizations considered here.

If $A_J$ does not have full row rank, we regard it as a full-rank configuration in its real linear span. Equivalently, one may replace it by a matrix with the same rational row space and full row rank. This leaves its toric ideal and the spaces $\im(A_\sigma^{\mathsf T})$ unchanged and identifies the corresponding umbrellas by a linear isomorphism of the column span. Applying Section 4 to each active contribution gives, for $\sigma\subseteq[n]$,
\begin{equation}\label{eq:binomial-fixed-L}
  \cT^L_{M_A(I,\beta),Y,0}\cap N_\sigma
  =
  \begin{cases}
    \im(A_\sigma^{\mathsf T})\times\{\infty\}^{\sigma^c},
    &\displaystyle \sigma\in\bigcup_{(\rho,J)\ \mathrm{active}}\Phi_{A_J}^{L|_J},\\[2mm]
    \varnothing,&\text{otherwise}.
  \end{cases}
\end{equation}
Here $L|_J$ denotes the restriction of the differential weights to $J$; in particular, every face of $\Phi_{A_J}^{L|_J}$ is regarded as a subset of $J\subseteq[n]$.

\begin{proposition}\label{prop:active-face-union}
For fixed $L$ as above, the extended tropical characteristic fiber determines exactly the union
\[
  \bigcup_{(\rho,J)\ \mathrm{active}}\Phi_{A_J}^{L|_J}
\]
of the active umbrella face sets. It records whether a face occurs in this union, but not which active toral component contributes it.
\end{proposition}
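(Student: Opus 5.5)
The plan is to read both claims directly off formula \eqref{eq:binomial-fixed-L}, after first checking that this formula is correctly assembled from earlier results. By \eqref{eq:binomial-char-union}, $\Ch^L(M_A(I,\beta))$ is the finite union of $\Ch^L(M_A(I_{\rho,J},0))$ over the active pairs. Intersection with $T_0^*X$, passage to the reduced variety, and extended tropicalization all commute with finite unions; for the last, Section~2.3 records that $\Trop^{\ext}$ is compatible with finite unions. For each active toral prime I would invoke the remark that, up to a trivially-valued rescaling, the system is the GKZ system for $A_J$ in the $J$-variables with $\partial_j=0$ for $j\notin J$. Proposition~\ref{prop:gkz-fiber} and Theorem~\ref{thm:recover-umbrella}, applied to $A_J$ after the full-rank replacement, then give the stratum $\im(A_\sigma^{\mathsf T})\times\{\infty\}^{\sigma^c}$ exactly for $\sigma\in\Phi_{A_J}^{L|_J}$. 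Here the coordinates outside $J$ are identically zero, so they contribute the $\infty$ entries. Taking the union over active pairs yields \eqref{eq:binomial-fixed-L}.

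For the first assertion, observe that \eqref{eq:binomial-fixed-L} gives a nonempty intersection $\cT^L\cap N_\sigma$ exactly when $\sigma$ lies in the union of the active umbrellas. This holds because $\im(A_\sigma^{\mathsf T})$ always contains $0$, so the stratum is nonempty even for $\sigma=\varnothing$, where it is the single point $(\infty,\ldots,\infty)$. Hence the union is recovered as $\{\sigma\subseteq[n]:\cT^L\cap N_\sigma\ne\varnothing\}$. Conversely, the union determines $\cT^L$, since the stratum content $\im(A_\sigma^{\mathsf T})$ depends only on $\sigma$ and $A$. So $\cT^L$ and the union carry exactly the same information.

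For the second assertion, the key observation is that the value of $\cT^L\cap N_\sigma$ depends only on $\sigma$, and not on which active $(\rho,J)$ contributes it. Thus two collections of active umbrellas with the same union produce identical tropical fibers, and the fiber cannot distinguish them. To make this concrete, I would point to the situation where a face $\sigma$ lies in the umbrellas of two different active components, or where the active sets differ for two parameters $\beta$ but the unions coincide. In either case the tropical data cannot separate the contributions.

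The only step needing care is the first: checking that each active contribution lands in the correct strata of $T_0^*X$ with coordinates indexed by $[n]$. This includes the embedding of faces of $\Phi_{A_J}^{L|_J}$ as subsets of $[n]$, and the verification that the full-rank replacement of $A_J$ preserves $\im(A_\sigma^{\mathsf T})$. The paper already asserts both of these before the statement, so I would simply cite them.
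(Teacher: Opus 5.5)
Your proposal is correct and follows the paper's proof. Both read the two assertions directly off \eqref{eq:binomial-fixed-L}: the stratum $N_\sigma$ occurs exactly when $\sigma$ lies in the union of active umbrellas, and its finite part $\im(A_\sigma^{\mathsf T})$ depends only on $\sigma$. Your re-derivation of that formula and your remark that $\im(A_\sigma^{\mathsf T})\ni 0$ (so the stratum is nonempty even for $\sigma=\varnothing$) only make explicit what the paper leaves implicit; they do not change the route.
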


\begin{proof}
By \eqref{eq:binomial-fixed-L}, the stratum $N_\sigma$ occurs precisely when $\sigma$ belongs to this union, and whenever it occurs its finite-coordinate part is $\im(A_\sigma^{\mathsf T})$, which depends only on $A_\sigma$.
\end{proof}

\subsection{Critical faces}

For a configuration $B=(b_j)_{j\in K}$ indexed by a finite set $K$ and a positive weight vector $\lambda=(\lambda_j)_{j\in K}$, let $\Phi_B^\lambda$ denote the face complex obtained from
\[
  \conv\!\left(0,\frac{b_j}{\lambda_j}:j\in K\right)
\]
by taking the faces not containing the origin. For $\tau\subseteq K$, write $\lambda_\tau=(\lambda_j)_{j\in\tau}$. Along the slope family we use
\[
  \lambda_\tau(s)=(L_s(\partial_j))_{j\in\tau}.
\]
If an umbrella $\Phi_{A_J}^{L_s|_J}$ changes at $s_0$, we call a face $\tau\in\Phi_{A_J}^{L_{s_0}|_J}$ \emph{critical at $s_0$} if
\[
  \lambda_\tau(s_0)\in\im(A_\tau^{\mathsf T}),
  \qquad
  \lambda_\tau(s)\notin\im(A_\tau^{\mathsf T})\quad(0<|s-s_0|\ll1).
\]
The first condition says that the scaled columns indexed by $\tau$ lie on a common supporting hyperplane at $s_0$; the second says that these equalities cannot hold simultaneously at nearby parameters.

\begin{lemma}\label{lem:critical-face}
Let $B=(b_j)_{j\in K}$ be a pointed configuration of rank $r$, regarded in its real span $\R B$, and let
\[
  \lambda(s)=(\lambda_j(s))_{j\in K}
\]
be an affine one-parameter family of positive weights. If $\Phi_B^{\lambda(s)}$ is not locally constant at $s_0$, then there is a facet $\tau\in\Phi_B^{\lambda(s_0)}$ such that
\[
  \lambda_\tau(s_0)\in\im(B_\tau^{\mathsf T}),
  \qquad
  \lambda_\tau(s)\notin\im(B_\tau^{\mathsf T})\quad(0<|s-s_0|\ll1).
\]
\end{lemma}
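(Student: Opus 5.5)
The plan is to argue by contraposition: assume that no facet of $\Phi_B^{\lambda(s_0)}$ is critical, and show that $\Phi_B^{\lambda(s)}$ is then constant near $s_0$.

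First I would describe facets by linear data. Put $P(s)=\conv(0,b_j/\lambda_j(s):j\in K)\subset\R B$. A facet of $P(s)$ not containing $0$ lies in an affine hyperplane $\{y:\langle c,y\rangle=1\}$ with $c\in(\R B)^*$ and $P(s)\subseteq\{\langle c,y\rangle\le1\}$. In terms of the columns this reads $\langle c,b_j\rangle\le\lambda_j(s)$ for all $j$. The index set is
\[
\tau=\{j:\langle c,b_j\rangle=\lambda_j(s)\},
\]
and $B_\tau$ spans $\R B$, so it has rank $r$. In particular $\lambda_\tau(s)=B_\tau^{\mathsf T}c\in\im(B_\tau^{\mathsf T})$, and $c$ is uniquely determined by $\tau$. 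Every face of $P(s)$ not containing $0$ lies in such a facet, because $B$ is pointed and so $0$ is a vertex. Hence $\Phi_B^{\lambda(s)}$ is determined by its facets together with the index sets of their faces.

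Next I would use the affine dependence on $s$. For a fixed $\tau$, the set $\{s:\lambda_\tau(s)\in\im(B_\tau^{\mathsf T})\}$ is the preimage of a linear subspace under an affine map of the line. It is therefore either a single point or all of $\R$. So if a facet $\tau$ of $\Phi_B^{\lambda(s_0)}$ is not critical, then $\lambda_\tau(s)\in\im(B_\tau^{\mathsf T})$ for every $s$. In that case there is a unique $c_\tau(s)$, affine in $s$, with $B_\tau^{\mathsf T}c_\tau(s)=\lambda_\tau(s)$. For $j\notin\tau$ we have $\langle c_\tau(s_0),b_j\rangle<\lambda_j(s_0)$ strictly, and by continuity these strict inequalities persist for $|s-s_0|$ small. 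Thus every facet $\tau$ at $s_0$ remains a facet of $P(s)$ with exactly the same index set, and the same holds for all faces of $\tau$, which are cut out by further linear equalities among the same data.

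The main obstacle is ruling out new facets for $s$ near $s_0$. My approach is as follows. Since $B$ is pointed, the cones $\operatorname{pos}(B_\tau)$ over the facets $\tau$ at $s_0$ cover $\operatorname{pos}(B)$, and they do so with disjoint interiors; this is a combinatorial statement independent of $s$. I would then consider the function
\[
h_s(y)=\max_\tau\langle c_\tau(s),y\rangle .
\]
By the persisting strict inequalities, $h_s$ is linear exactly on the cones $\operatorname{pos}(B_\tau)$, and $\{y\in\operatorname{pos}(B):h_s(y)\le1\}=P(s)$. The facets of $P(s)$ not containing $0$ are then exactly these $\tau$. I expect the verification that $h_s$ stays convex with precisely these linearity domains to need care: I would check it via the local folding inequalities across codimension-one walls, which are strict at $s_0$ and hence stable. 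With that in hand, $\Phi_B^{\lambda(s)}=\Phi_B^{\lambda(s_0)}$ for $s$ near $s_0$. This contradicts the assumption that the umbrella is not locally constant at $s_0$, so some facet must be critical.
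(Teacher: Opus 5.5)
Your proof is correct, but it runs in the opposite direction from the paper's.

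\textbf{The paper's route.} The paper argues directly. After possibly reversing the parameter, it picks a facet $\sigma$ valid for $s<s_0$ one of whose strict supporting inequalities becomes an equality at $s_0$. It passes to the limiting functional $h_{s_0}$ and shows that this cuts out a facet $\tau\supsetneq\sigma$ at $s_0$. Criticality then follows because any functional realizing $\lambda_\tau(s)$ must agree with $h_s$ on the spanning set $B_\sigma$. That forces $\lambda_k(s)=h_s(b_k)$ for some $k\in\tau\setminus\sigma$, which fails for $s\neq s_0$ by affineness.

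\textbf{Your route.} You argue by contraposition. The affine dichotomy ($\{s:\lambda_\tau(s)\in\im(B_\tau^{\mathsf T})\}$ is either $\{s_0\}$ or all of $\R$) turns non-criticality into persistence of every $s_0$-facet with the same index set. The $s$-independent covering of $\operatorname{pos}(B)$ by the cones $\operatorname{pos}(B_\tau)$ then rules out new facets.

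\textbf{What each buys.} The paper's route produces the critical facet explicitly, as the merger at $s_0$ of a degenerating one-sided facet, with $\sigma\subsetneq\tau$. Yours gives a two-sided characterization: the umbrella is locally constant at $s_0$ if and only if no facet at $s_0$ is critical. The converse is immediate, since a critical $\tau$ cannot index any face at nearby $s\neq s_0$. Your route also makes explicit the covering argument that the paper uses tacitly when it asserts that some one-sided facet must degenerate at $s_0$.

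\textbf{Your final step is easier than you expect.} Since $h_s$ is a maximum of linear functionals, it is automatically convex. Your persistence step already gives
\[
\langle c_{\tau'}(s),b_j\rangle\le\lambda_j(s)=\langle c_\tau(s),b_j\rangle\qquad(j\in\tau),
\]
and hence $h_s=c_\tau(s)$ on $\operatorname{pos}(B_\tau)$. No folding analysis across walls is needed. You could even skip $h_s$: cones over distinct facets of $P(s)$ not containing $0$ have disjoint interiors, and the persisted $s_0$-facets already cover $\operatorname{pos}(B)$, so no new facet fits.

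\textbf{Lower-dimensional faces.} Your phrase ``cut out by further linear equalities among the same data'' is vague here. The clean reason is that
\[
\conv\bigl(b_j/\lambda_j(s):j\in\tau\bigr)=\operatorname{pos}(B_\tau)\cap\{c_\tau(s)=1\},
\]
whose face lattice is that of the cone $\operatorname{pos}(B_\tau)$ and so does not depend on $s$. The paper relies on the same fact when it says the face complex is determined by its facets.
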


\begin{proof}
After shrinking around $s_0$, the umbrella is constant on each of the two punctured intervals. Since its face complex is determined by its facets, after reversing the parameter if necessary we may choose a facet $\sigma$ for $s<s_0$ such that at least one of its strict supporting inequalities becomes an equality at $s_0$.

Because $\sigma$ is a facet not containing the origin, the columns $B_\sigma$ span $\R B$. Its supporting functional $h_s\in(\R B)^*$ is therefore uniquely determined by
\[
  h_s(b_j)=\lambda_j(s)\ (j\in\sigma),
  \qquad
  h_s(b_k)<\lambda_k(s)\ (k\notin\sigma),
\]
and depends affinely on $s$. Put $d_k(s)=\lambda_k(s)-h_s(b_k)$ and
\[
  \tau=\{k:d_k(s_0)=0\}.
\]
For every $j\in\sigma$, the affine function $d_j$ vanishes for $s<s_0$ and hence identically near $s_0$, so $\sigma\subseteq\tau$. Moreover, the inequalities $d_k(s)>0$ for $k\notin\sigma$ and $s<s_0$ imply $d_k(s_0)\geq0$. Thus $h_{s_0}$ is supporting and $\tau$ is the full index set of the face cut out by its supporting hyperplane.

The scaled points indexed by $\sigma$ still span $\R B$ linearly and lie on this hyperplane. They therefore affinely span the hyperplane, so the face indexed by $\tau$ is a facet of $\Phi_B^{\lambda(s_0)}$. The index set $\sigma$ need not define a face at $s_0$; it is contained in the larger facet $\tau$. By the choice of $\sigma$, at least one strict inequality becomes equality at $s_0$, and hence $\sigma\subsetneq\tau$.

Choose $k\in\tau\setminus\sigma$. The affine function $d_k$ is positive for $s<s_0$ sufficiently close to $s_0$ and vanishes at $s_0$. Since it is not identically zero, $d_k(s)\neq0$ for all sufficiently close $s\neq s_0$.

At $s_0$, the functional $h_{s_0}$ gives $\lambda_\tau(s_0)\in\im(B_\tau^{\mathsf T})$. Suppose that $\lambda_\tau(s)\in\im(B_\tau^{\mathsf T})$ for some sufficiently close $s\neq s_0$. Then there is $g_s\in(\R B)^*$ with $g_s(b_j)=\lambda_j(s)$ for all $j\in\tau$. On $\sigma$, the functionals $g_s$ and $h_s$ satisfy the same equalities. Since $B_\sigma$ spans $\R B$, they coincide. Evaluating at the chosen $k$ gives $d_k(s)=0$, a contradiction.
\end{proof}

\subsection{Slope detection for binomial modules}

\begin{theorem}\label{thm:binomial-slopes}
Let $M_A(I,\beta)$ be a holonomic binomial $D$-module and let $Y$ be a coordinate subspace. Then
\[
  \Jump^{\mathrm{trop}}_{Y,0}(M_A(I,\beta))
  =\Slope_0(M_A(I,\beta),Y).
\]
\end{theorem}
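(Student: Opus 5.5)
The inclusion $\Jump^{\mathrm{trop}}_{Y,0}(M_A(I,\beta))\subseteq\Slope_0(M_A(I,\beta),Y)$ is Proposition~\ref{prop:general-inclusion} with $y=0$. So the work is the reverse inclusion. Let $s_0\in\Slope_0(M_A(I,\beta),Y)$. By \eqref{eq:binomial-slope-union}, $s_0$ is a slope at the origin of $M_A(I_{\rho,J},0)$ for some active toral pair $(\rho,J)$. By the reduction to the GKZ system for $A_J$ (after rescaling) and \cite[Corollary~4.18]{SchulzeWalther}, the umbrella $\Phi_{A_J}^{L_s|_J}$ is therefore not locally constant at $s_0$. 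We regard $A_J$ in its real span, so it is a pointed configuration of full rank there. The weights $\lambda_j(s)=L_s(\partial_j)$, which equal $1$ or $s$, form an affine family of positive weights. Lemma~\ref{lem:critical-face} then supplies a facet $\tau\in\Phi_{A_J}^{L_{s_0}|_J}$ that is critical at $s_0$.

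Next, use \eqref{eq:binomial-fixed-L}. Since $\tau$ belongs to the active union at $s_0$, the stratum $N_\tau$ occurs in $\cT^{L_{s_0}}_{M_A(I,\beta),Y,0}$. I claim that $N_\tau$ does not occur at any nearby $s\neq s_0$; then the family cannot be locally constant at $s_0$, so $s_0$ is a tropical jump. To prove the claim, suppose $N_\tau$ occurs at such an $s$. By Proposition~\ref{prop:active-face-union}, $\tau\in\Phi_{A_{J'}}^{L_s|_{J'}}$ for some active pair $(\rho',J')$ with $\tau\subseteq J'$. Being a face of that umbrella means there is a supporting functional $h$ on $\R A_{J'}$ with $h(a_j)=\lambda_j(s)$ for all $j\in\tau$. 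Restricting $h$ to $\R A_\tau$ gives $\lambda_\tau(s)\in\im(A_\tau^{\mathsf T})$. This contradicts criticality, since criticality depends only on the columns $A_\tau$ and the weights $\lambda_\tau$, not on the ambient configuration $J$ or $J'$. Here I use that membership in $\im(A_\tau^{\mathsf T})$ does not depend on whether we view $A_\tau$ inside $\R A_J$, inside $\R A_{J'}$, or inside $\R^d$: a linear functional on the span of $A_\tau$ extends to all of $\R^d$.

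A finiteness point remains. There are only finitely many active pairs and finitely many subsets $\tau$. Each condition $\lambda_\tau(s)\notin\im(A_\tau^{\mathsf T})$ holds on a punctured neighbourhood of $s_0$, so one common punctured neighbourhood works for all of them.

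The main obstacle I anticipate is exactly this step of ruling out a different active component $J'$ contributing the same face $\tau$ at nearby parameters. Proposition~\ref{prop:active-face-union} warns that the tropical fiber cannot tell which component contributes a face. This is why criticality was phrased intrinsically, through $\lambda_\tau\in\im(A_\tau^{\mathsf T})$, and the argument needs that intrinsic formulation. I would also check two details of the reduction: that the rescaling by $\rho$ and the full-rank replacement of $A_J$ leave the umbrella faces unchanged as subsets of $J$, and that the slope of the individual component $M_A(I_{\rho,J},0)$ is indeed an umbrella change of $\Phi_{A_J}$ in the $J$-variables (the variables $j\notin J$ with $\partial_j=0$ only enlarge the coordinate set). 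The second point should follow from the same argument as Corollary~\ref{cor:gkz-slopes}.
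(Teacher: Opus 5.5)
Your proposal is correct and follows the paper's proof essentially step by step. You use Proposition~\ref{prop:general-inclusion} for one inclusion, then \eqref{eq:binomial-slope-union} and Lemma~\ref{lem:critical-face} on an active $A_J$ to obtain a critical facet $\tau$, and then the fact that criticality depends only on $(A_\tau,\lambda_\tau)$, so no active component can produce $N_\tau$ at nearby $s\neq s_0$. Your finiteness remark is harmless but unnecessary: the single punctured neighbourhood given by criticality of $\tau$ already rules out every active $J'$ at once.
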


\begin{proof}
The inclusion from left to right is Proposition~\ref{prop:general-inclusion}. Conversely, let $s_0\in\Slope_0(M_A(I,\beta),Y)$. By \eqref{eq:binomial-slope-union}, there is an active toral component $C_{\rho,J}$ such that $\Phi_{A_J}^{L_s|_J}$ changes at $s_0$. Apply Lemma~\ref{lem:critical-face} to $B=A_J$, and let $\tau\subseteq J$ be the resulting critical facet.

At $s=s_0$, \eqref{eq:binomial-fixed-L} gives
\[
  \cT^{L_{s_0}}_{M_A(I,\beta),Y,0}\cap N_\tau\neq\varnothing.
\]
If the same stratum occurred for some sufficiently close $s\neq s_0$, then \eqref{eq:binomial-fixed-L} would place $\tau$ in the umbrella of some active toral component. A supporting functional for that face would imply $\lambda_\tau(s)\in\im(A_\tau^{\mathsf T})$, contrary to criticality. Hence the stratum is absent for all sufficiently close $s\neq s_0$, and the tropical characteristic family jumps at $s_0$.
\end{proof}

\section{Tropical characterization of regularity}\label{sec:regularity}

For GKZ systems and holonomic binomial $D$-modules, the preceding sections recover the classical slopes from the variation of the extended tropical characteristic fiber. We now combine this with the classical relation between slopes and regular holonomicity.

\subsection{Regularity and homogeneity}

Fernández-Fernández and Castro-Jiménez prove that a holonomic binomial module $M_A(I,\beta)$ is regular holonomic if and only if every active toral associated prime is homogeneous \cite[Theorem~3.7]{FernandezCastro}. For a toral prime $I_{\rho,J}$, this is equivalent to homogeneity of the configuration $A_J$ \cite[Lemma~3.5]{FernandezCastro}: there is a rational linear functional $h:\mathbb Q A_J\to\mathbb Q$ with $h(a_j)=1$ for all $j\in J$, or equivalently the columns of $A_J$ lie on an affine hyperplane not containing the origin.

If an active $A_J$ is nonhomogeneous, the associated hypergeometric system has a slope along a suitable coordinate subspace at the origin \cite[Corollary~3.16]{SchulzeWalther}. By \eqref{eq:binomial-slope-union}, the same slope occurs for $M_A(I,\beta)$.

\subsection{The tropical regularity criterion}

\begin{theorem}\label{thm:regularity}
Let $M_A(I,\beta)$ be a holonomic binomial $D$-module. Then $M_A(I,\beta)$ is regular holonomic if and only if, for every coordinate subspace $Y$,
\[
  \Jump^{\mathrm{trop}}_{Y,0}(M_A(I,\beta))=\varnothing.
\]
\end{theorem}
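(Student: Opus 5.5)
By Theorem~\ref{thm:binomial-slopes}, for every coordinate subspace $Y$ we have $\Jump^{\mathrm{trop}}_{Y,0}(M_A(I,\beta))=\Slope_0(M_A(I,\beta),Y)$. The statement is therefore equivalent to the following claim: $M_A(I,\beta)$ is regular holonomic if and only if it has no slopes at the origin along any coordinate subspace. I will prove the two directions of this slope-theoretic statement using the results recalled in Section~\ref{sec:regularity}.

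\emph{Regular implies no jumps.} Assume $M_A(I,\beta)$ is regular holonomic. By \cite[Theorem~3.7]{FernandezCastro}, every active toral prime $I_{\rho,J}$ is homogeneous. By \cite[Lemma~3.5]{FernandezCastro}, each such $A_J$ is then a homogeneous configuration: there is a rational functional $h$ with $h(a_j)=1$ for all $j\in J$.

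Fix a coordinate subspace $Y$. The umbrella $\Phi_{A_J}^{L_s|_J}$ should then be independent of $s$. My first idea is the standard argument that a homogeneous GKZ system has no slopes \cite{SchulzeWalther}. Alternatively, regularity directly forces the $L$-characteristic varieties to be constant along the $F$--$V_Y$ family (the Laurent--Mebkhout characterization), which gives $\Slope_0(M_A(I_{\rho,J},0),Y)=\varnothing$ for each active component.

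This step is the one I expect to need the most care. The naive convex-geometric claim, that scaling columns lying on an affine hyperplane by $s$-dependent weights leaves the umbrella's face structure unchanged, is not literally true when only some columns are rescaled. So I would cite the slope-free property of regular holonomic modules rather than argue it combinatorially. By \eqref{eq:binomial-slope-union}, $\Slope_0(M_A(I,\beta),Y)=\varnothing$, and by Theorem~\ref{thm:binomial-slopes} the jump set is empty. In fact, the slope-free property of regular holonomic modules \cite{LaurentMebkhout} gives $\Slope_0(M,Y)=\varnothing$ directly, and Proposition~\ref{prop:general-inclusion} alone then gives the conclusion, with no need to pass through toral components.

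\emph{No jumps implies regular.} I argue by contraposition. Suppose $M_A(I,\beta)$ is not regular holonomic. By \cite[Theorem~3.7]{FernandezCastro} and \cite[Lemma~3.5]{FernandezCastro}, some active toral component has a nonhomogeneous configuration $A_J$. By \cite[Corollary~3.16]{SchulzeWalther}, the hypergeometric system for $A_J$ has a slope $s_0$ along some coordinate subspace $Y$ at the origin. Here I use the identification of the $D_J$-part with a GKZ system after rescaling, and I must extend a coordinate subspace of $\C^J$ to one of $\C^n$, which I would do by taking the same coordinates $V\subseteq J$. By \eqref{eq:binomial-slope-union}, $s_0\in\Slope_0(M_A(I,\beta),Y)$. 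Theorem~\ref{thm:binomial-slopes} then gives $s_0\in\Jump^{\mathrm{trop}}_{Y,0}(M_A(I,\beta))$, so this jump set is nonempty.

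The only delicate point in this direction is checking that a slope of $M_A(I_{\rho,J},0)$ along the extended subspace matches the slope of the $A_J$-system in the parameter $s$. This holds because $L_s$ restricted to $J$ is exactly the $F$--$V$ family on the $J$-coordinates, with the extra coordinates carrying $\partial_j=0$.
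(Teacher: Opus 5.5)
Your proof is correct and is essentially the paper's argument. Regularity gives no slopes and hence no tropical jumps. Irregularity gives a nonhomogeneous active $A_J$, whose slope from \cite[Corollary~3.16]{SchulzeWalther} passes to $M_A(I,\beta)$ via \eqref{eq:binomial-slope-union} and becomes a tropical jump by Theorem~\ref{thm:binomial-slopes}.

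Your observation that Proposition~\ref{prop:general-inclusion} already suffices for the forward direction is a valid small economy; the paper invokes Theorem~\ref{thm:binomial-slopes} there. Your hesitation about the combinatorial route is unneeded, since you do not rely on it, and it is in fact unfounded. For homogeneous $A_J$, whether $\lambda_\tau(s)\in\im(A_\tau^{\mathsf T})$ holds does not depend on $s>1$, so Lemma~\ref{lem:critical-face} shows the umbrella is constant there.
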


\begin{proof}
If $M_A(I,\beta)$ is regular holonomic, it has no slopes along coordinate subspaces. By Theorem~\ref{thm:binomial-slopes}, there are no tropical jumps.

Conversely, suppose that $M_A(I,\beta)$ is irregular. Then some active toral prime $I_{\rho,J}$ has nonhomogeneous configuration $A_J$. As recalled above, its hypergeometric system has a slope along a suitable coordinate subspace at the origin. Formula \eqref{eq:binomial-slope-union} makes this a slope of $M_A(I,\beta)$, and Theorem~\ref{thm:binomial-slopes} gives a tropical jump. Thus the stated vanishing condition fails.
\end{proof}

\subsection{A parameter-dependent binomial example}

We reinterpret \cite[Example~4.5]{FernandezCastro}. Let
\[
  A=\begin{pmatrix}1&0&1&2&3\\0&1&1&2&3\end{pmatrix}
\]
and let $I$ be the binomial ideal considered there. Its toral prime supports are $J_1=\{3,5\}$, $J_2=\{4,5\}$, $J_3=\{1,2,5\}$, and $J_4=\{5\}$. Writing $I_i$ for the corresponding toral prime, the partial characters are trivial, $\qdeg(R/I_i)=\C\cdot(1,1)$ for $i=1,2,4$, and $\qdeg(R/I_3)=\C^2$.

For $Y=\{x_5=0\}$, the differential weights are $(1,1,1,1,s)$. The relations $\partial_4^3-\partial_5^2$, $\partial_3^3-\partial_5$, and $\partial_1^3\partial_2^3-\partial_5$ give the exceptional parameters $s=3/2,3,6$, with tropical strata $N_{\{4,5\}}$, $N_{\{3,5\}}$, and $N_{\{1,2,5\}}$. Their finite-coordinate parts are, respectively,
\[
  \{(2u,3u):u\in\R\},\qquad
  \{(u,3u):u\in\R\},\qquad
  \{(u,v,3u+3v):u,v\in\R\}.
\]
If $\beta\notin\C\cdot(1,1)$, only $J_3$ contributes and
\[
  \Jump^{\mathrm{trop}}_{Y,0}(M_A(I,\beta))=\{6\}.
\]
If $\beta\in\C\cdot(1,1)$, all three slope-producing components contribute and
\[
  \Jump^{\mathrm{trop}}_{Y,0}(M_A(I,\beta))=\{3/2,3,6\}.
\]
These are exactly the slope sets in \cite[Example~4.5]{FernandezCastro}.

\subsection{The one-dimensional picture}\label{sec:one-dimensional-comparison}

We conclude by returning to one variable, which provides the basic model for the preceding construction. Let
\[
  P=\sum_{i,j}c_{ij}x^i\partial^j
\]
be a differential operator. For $Y=\{x=0\}$,
\[
  L_s(x^i\partial^j)=j+(s-1)(j-i).
\]
Thus comparison of the $L_s$-degrees of the monomials $x^i\partial^j$ is the familiar supporting-line calculation for the Newton polygon of $P$. As $s$ varies, the corresponding initial data change only at the parameters determined by its nonhorizontal edges; these are the classical slopes.

The direct tropicalization of differential ideals studied in \cite{HsiaoTropD} makes this relation explicit. In one variable, \cite[Example~2.6]{HsiaoTropD} shows that the tropicalization of the principal ideal $DP$ recovers the nonhorizontal Newton-polygon slopes, and that its emptiness is equivalent to Fuchsian regularity. The construction in the present paper is different: an $L_s$-weight first determines the $L_s$-characteristic variety, which is then restricted to a cotangent fiber and tropicalized orbitwise. Sections 4 and 5 show that, for GKZ systems and holonomic binomial $D$-modules, this intermediate geometric step still retains all slope information.

Thus Theorem~\ref{thm:regularity} may be viewed as a higher-dimensional counterpart of the one-dimensional picture. For an ordinary differential equation, regular singularity is equivalent to the absence of nontrivial Newton-polygon slopes; for a holonomic binomial $D$-module, regular holonomicity is equivalent to the absence of tropical jumps along all coordinate subspaces.

\end{document}